\documentclass[11pt]{amsart}
\usepackage{amsmath}
\usepackage{amsfonts}

\usepackage{amscd}
\usepackage{amsthm}
\usepackage{amssymb} \usepackage{latexsym}
\usepackage{eufrak}
\usepackage{euscript}
\usepackage{epsfig}
\usepackage{graphics}
\usepackage{array}
\usepackage{enumerate}
\usepackage{dsfont}
\usepackage{color}
\usepackage{wasysym}
\usepackage{hyperref}
\usepackage{pdfsync}

\newcommand{\red}{\textcolor{red}}

\newcommand{\bel}[1]{\begin{equation}\label{#1}}

\newcommand{\be}{\begin{equation}}

\newcommand{\ba}{\begin{eqnarray}}
\newcommand{\ea}{\end{eqnarray}}

\newcommand{\qe}{\end{equation}}
\newcommand{\R}{{\mathbb R}}

\newcommand{\Z}{{\mathbb Z}}

\newcommand{\asinh}{{\mbox{arcsinh}}}

\newcommand{\Hmm}[1]{\leavevmode{\marginpar{\tiny%
$\hbox to 0mm{\hspace*{-0.5mm}$\leftarrow$\hss}%
\vcenter{\vrule depth 0.1mm height 0.1mm width \the\marginparwidth}%
\hbox to
0mm{\hss$\rightarrow$\hspace*{-0.5mm}}$\\\relax\raggedright #1}}}

\theoremstyle{theorem}
\newtheorem{thm}{Theorem}[section]

\theoremstyle{example}
\newtheorem{example}{Example}[section]
\theoremstyle{corollary}
\newtheorem{coro}{Corollary}[section]
\theoremstyle{lemma}
\newtheorem{lemma}{Lemma}[section]
\theoremstyle{definition}
\newtheorem{defi}{Definition}[section]
\theoremstyle{proof}

\theoremstyle{remark}
\newtheorem{rem}{Remark}[section]

\begin{document}

\title{Sharp Davies-Gaffney-Grigor'yan Lemma on Graphs}
\author{Frank Bauer}
\email{fbauer@math.harvard.edu}
\address{Harvard University Department of Mathematics\\
One Oxford Street, Cambridge MA 02138, Usa  and \\
Max Planck Institute for Mathematics in the Sciences\\
Inselstrasse 22, 04103 Leipzig, Germany}
\author{Bobo Hua}
\email{bobohua@fudan.edu.cn}
\address{School of Mathematical Sciences, LMNS, Fudan University, Shanghai 200433, China; Shanghai Center for Mathematical Sciences, Fudan University, Shanghai 200433, China}
\author{Shing-Tung Yau}
\email{yau@math.harvard.edu}
\address{Harvard University Department of Mathematics\\
One Oxford Street, Cambridge MA 02138, Usa}

\thanks{F.B. was partially supported by the Alexander von Humboldt foundation. S.T.Y. acknowledges support by the University of Pennsylvania/Air Force Office of Scientific Research grant ``Geometry and Topology of Complex Networks",
Award \#561009/FA9550-13-1-0097 and NSF DMS 1308244  Nonlinear Analysis on Sympletic, Complex Manifolds, General Relativity, and graph. B.H. was supported by NSFC, grant no. 11401106.}

\begin{abstract}In this note, we prove the sharp Davies-Gaffney-Grigor'yan lemma for minimal heat kernels on graphs.
\end{abstract}
\maketitle

\section{Introduction} The Davies-Gaffney-Grigor'yan Lemma (DGG Lemma for short) is a powerful tool in geometric analysis that leads to strong heat kernel estimates and has many important applications. On manifolds one writes it in the form

\begin{lemma}[Davies-Gaffney-Grigor'yan]\label{DGGManifold}
Let $M$ be a complete Riemannian manifold and $p_t(x,y)$ the minimal heat kernel on $M$. For any two measurable  subsets $B_1$ and $B_2$ of $M$ and $t>0,$ we have
\begin{equation}\label{e:DGG Riemannian}\int_{B_1}\int_{B_2} p_t(x,y)d\mathrm{vol}(x)d\mathrm{vol}(y) \leq \sqrt{\mathrm{vol}(B_1)\mathrm{vol}(B_2)}\exp\left(-\lambda t - \frac{d^2(B_1,B_2)}{4t}\right),\end{equation}
where $\lambda$ is the greatest lower bound, i.e. the bottom, of the $\ell^2$-spectrum of the Laplacian on $M$ and $d(B_1,B_2)=\inf_{x_1\in B_1,x_2\in B_2}d(x_1,x_2)$ the distance between $B_1$ and $B_2$.
\end{lemma}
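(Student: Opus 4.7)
The plan is to combine Davies' exponential-weight method with the variational characterization of $\lambda$, so that both exponential factors emerge together from a single Gronwall-type differential inequality. By the symmetry of the heat kernel the double integral equals a single one:
\[
\int_{B_1}\int_{B_2} p_t(x,y)\, d\vol(x)\, d\vol(y) = \int_{B_2} (P_t \mathbf{1}_{B_1})(y)\, d\vol(y),
\]
where $P_t = e^{t\Delta}$ is the minimal heat semigroup. Let $d = d(B_1,B_2)$, fix a $1$-Lipschitz function $\xi$ on $M$ with $\xi \le 0$ on $B_1$ and $\xi \ge d$ on $B_2$ (for instance $\xi(x) = d(x,B_1)$), and let $\alpha > 0$ be a parameter to be tuned at the end.

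The central quantity is the weighted energy
\[
E(t) = \int_M (P_t \mathbf{1}_{B_1})^2\, e^{2\alpha \xi}\, d\vol.
\]
Differentiating in $t$, integrating by parts, and then making the substitution $v = (P_t \mathbf{1}_{B_1})\, e^{\alpha \xi}$ (which has $\int v^2 = E(t)$), the cross terms containing $\nabla u \cdot \nabla \xi$ cancel and one is left with the clean identity
\[
E'(t) = -2\int_M |\nabla v|^2\, d\vol + 2\alpha^2 \int_M v^2 |\nabla \xi|^2\, d\vol.
\]
The pointwise bound $|\nabla \xi| \le 1$ controls the second integral by $2\alpha^2 E(t)$, while the variational definition of $\lambda$ applied to $v$ gives $\int |\nabla v|^2 \ge \lambda \int v^2 = \lambda E(t)$. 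The resulting inequality $E'(t) \le 2(\alpha^2 - \lambda) E(t)$ integrates to $E(t) \le e^{2(\alpha^2 - \lambda) t}\, \vol(B_1)$. Combining this with $E(t) \ge e^{2\alpha d} \int_{B_2} (P_t \mathbf{1}_{B_1})^2$ and a final Cauchy--Schwarz inequality on $B_2$ yields
\[
\int_{B_2} P_t \mathbf{1}_{B_1}\, d\vol \le \sqrt{\vol(B_1)\vol(B_2)}\; e^{(\alpha^2 - \lambda) t - \alpha d}.
\]
The optimal choice $\alpha = d/(2t)$ produces exactly the advertised exponent $-\lambda t - d^2/(4t)$.

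The principal technical issue is justifying that $v = e^{\alpha \xi}\, P_t \mathbf{1}_{B_1}$ lies in the form domain of $-\Delta$, since the weight $e^{\alpha \xi}$ grows at infinity while $P_t \mathbf{1}_{B_1}$ is only $L^2$ a priori, so neither the integration by parts nor the spectral inequality is immediate. I would handle this in the standard way by exhausting $M$ by a sequence of relatively compact open domains $\Omega_n \nearrow M$, running the weighted-energy argument above for the Dirichlet heat semigroup on each $\Omega_n$ (where all weighted integrals are trivially finite), and then passing to the limit via the monotone convergence of the Dirichlet heat kernels to the minimal heat kernel on $M$; the same exhaustion handles the reduction to the case $\vol(B_i) < \infty$, outside of which the stated inequality is vacuous.
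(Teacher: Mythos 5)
Your proof is correct, and it is precisely the continuum version of the strategy the paper uses to prove its graph analogue (Theorem~\ref{thm:Davies}): Grigor'yan's integral maximum principle for the weighted energy $E(t)=\int_M (P_t\mathbf{1}_{B_1})^2e^{2\alpha\xi}\,d\mathrm{vol}$, with the cross term eliminated exactly as in Lemma~\ref{l:basic}~(c), the spectral bound applied via $\lambda_1(\Omega_n)\geq\lambda$ on an exhaustion by Dirichlet domains, and a final optimization over the weight parameter (here $\alpha=d/(2t)$, the analogue of the optimization over $\kappa$ that produces $\zeta_s$). The paper states Lemma~\ref{DGGManifold} only as background without proof, but your argument is sound and matches the method of Sections~\ref{sec:Integral Max}--\ref{sec:main theorem}.
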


While it would be desireable to obtain a DGG Lemma on graphs, it is known that it fails to be true in this setting. This already follows from the explicit calculation of the heat kernel on the lattice $\Z$ by Pang \cite{Pang93}. More generally, a  result of Coulhon and Sikora \cite{Coulhon08} implies in the graph case that the DGG Lemma is equivalent to the finite propagation speed property of the wave equation. However, Friedman and Tillich \cite[pp.249]{FriedmanTillich04} showed that for graphs the wave equation does not have the finite propagation speed property.

Still the situation is not totally hopeless since all obstructions appear when time $t$ is small compared to the distance $d$. In fact, it was recently shown in \cite{KellerLenzMuenchSchmidtTelcs15} that for small time heat kernels on graphs behave roughly like $t^d$, whereas for large time one expects they behave similarly to heat kernels on manifolds.

In a previous publication \cite[Theorem~1.1]{BauerHuaYau14} we were able to prove a version of the DGG Lemma on graphs and used it to prove for the first time heat kernel estimates for graphs with negative curvature lower bounds. However the previous DGG Lemma was not sharp in the following sense: On one hand, we only obtained the estimate as $\exp\left(-\frac12\lambda t\right)$ for the term involving the bottom of the spectrum $\lambda$ which is not optimal due to the factor 1/2. On the other hand, for technical reasons we had to re-scale and shift time which resulted in a weak version of Gaussian type estimate even for large times.

In this note, we adopt another proof-strategy, initiated by Coulhon and Sikora \cite{Coulhon08}, to resolve these problems and derive a sharp version of the DGG Lemma. Moreover, this approach allows us to extend the DGG Lemma to the far-reaching setting, i.e. for unbounded Laplacians on infinite graphs equipped with intrinsic metrics. For precise definitions and the terminology used, we refer to Section~\ref{sec:2}.

In particular we prove:

\begin{thm}[Functional formulation of DGG Lemma on graphs]\label{thm:Davies}
  Let $(V,\mu,m)$ be a weighted graph with an intrinsic metric $\rho$ with finite jump size $s>0.$ Let $A,B$ be two subsets in $V$ and $f,g\in \ell^2_m$ with $\mathrm{supp} f\subset A, \mathrm{supp} g\subset B,$ then
  $$|\langle e^{t\Delta}f,g\rangle|\leq e^{-\lambda t-\zeta_s(t,\rho(A,B))}\|f\|_{\ell^2_m}\|g\|_{\ell^2_m},$$ where $\lambda$ is the bottom of the $\ell^2$-spectrum of Laplacian and $$\zeta_s(t,r)=\frac{1}{s^2}\left(rs\cdot\asinh{\frac{rs}{t}}-\sqrt{t^2+r^2s^2}+t\right),\quad t>0,r\geq 0.$$
\end{thm}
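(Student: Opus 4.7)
The plan is to implement the classical Davies exponential perturbation method in the graph setting, using the intrinsic condition and the finite jump size to control the error term, and then to optimize the resulting bound over the perturbation parameter so as to recover the sharp exponent $\zeta_s(t,\rho(A,B))$. Fix $\alpha>0$ and let $\phi(x):=\min(\rho(x,A),N)$ for some large $N\geq\rho(A,B)$; the truncation keeps $\phi$ bounded (so that $e^{\pm\alpha\phi}$ act as bounded multiplication operators on $\ell^2_m$ even when $\Delta$ is unbounded), while preserving the $1$-Lipschitz property $|\phi(y)-\phi(x)|\le\rho(x,y)$. In particular, the finite jump size forces $|\phi(y)-\phi(x)|\le s$ whenever $\mu_{xy}>0$. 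I would then consider the conjugated operator $\tilde{\Delta}_\alpha:=e^{\alpha\phi}\Delta e^{-\alpha\phi}$; a direct summation by parts, symmetrizing in $(x,y)$ and using $\mu_{xy}=\mu_{yx}$, gives
\[\langle \tilde{\Delta}_\alpha f, f\rangle_{\ell^2_m}=\langle \Delta f, f\rangle_{\ell^2_m}+\sum_{x,y}\mu_{xy}\bigl(\cosh(\alpha(\phi(y)-\phi(x)))-1\bigr)f(x)f(y).\]

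The crucial step is bounding the extra term. Convexity of $u\mapsto\cosh(\alpha u)-1$ on $[-s,s]$ yields $\cosh(\alpha u)-1\le(u^2/s^2)(\cosh(\alpha s)-1)$ for $|u|\le s$; combining this with $|\phi(y)-\phi(x)|\le\rho(x,y)$, the elementary inequality $|f(x)f(y)|\le\tfrac12(f(x)^2+f(y)^2)$, and the intrinsic condition $\sum_y\mu_{xy}\rho(x,y)^2\le m(x)$, the extra term is dominated by $\frac{\cosh(\alpha s)-1}{s^2}\|f\|_{\ell^2_m}^2$. Using the spectral bound $-\langle\Delta f,f\rangle\ge\lambda\|f\|_{\ell^2_m}^2$ and applying Gronwall to $\tfrac{d}{dt}\|e^{t\tilde{\Delta}_\alpha}u\|^2=2\langle\tilde{\Delta}_\alpha e^{t\tilde{\Delta}_\alpha}u,e^{t\tilde{\Delta}_\alpha}u\rangle$, I would deduce
\[\bigl\|e^{\alpha\phi}e^{t\Delta}e^{-\alpha\phi}\bigr\|_{\ell^2_m\to\ell^2_m}\le\exp\!\left(t\!\left(\frac{\cosh(\alpha s)-1}{s^2}-\lambda\right)\right).\]

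Since $\phi\equiv 0$ on $A\supseteq\mathrm{supp}\,f$ and $\phi\ge r:=\rho(A,B)$ on $B\supseteq\mathrm{supp}\,g$ (for $N\ge r$), rewriting $\langle e^{t\Delta}f,g\rangle=\langle e^{t\tilde{\Delta}_\alpha}(e^{\alpha\phi}f),e^{-\alpha\phi}g\rangle$ and applying Cauchy--Schwarz gives
\[|\langle e^{t\Delta}f,g\rangle|\le\|f\|_{\ell^2_m}\|g\|_{\ell^2_m}\exp\!\left(-\lambda t-\alpha r+\frac{t(\cosh(\alpha s)-1)}{s^2}\right).\]
Optimizing the exponent in $\alpha>0$, the critical equation $\sinh(\alpha s)=rs/t$ has the unique solution $\alpha s=\asinh(rs/t)$, at which the value is precisely $-\lambda t-\zeta_s(t,r)$, as a short computation confirms. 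The main technical obstacle is the justification of the perturbation $\tilde{\Delta}_\alpha$ in the unbounded-Laplacian setting, where the bare choice $\phi=\rho(\cdot,A)$ produces operators that do not preserve $\ell^2_m$; the truncation $\phi_N$ with bounds independent of $N$ sidesteps this, making the passage $N\to\infty$ routine. All remaining ingredients---the quadratic-form identity, the convexity/intrinsic estimate, Gronwall, and the Legendre-type optimization---are then a matter of standard calculation.
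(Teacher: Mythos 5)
Your proposal is correct and is essentially the paper's own argument in a different (but equivalent) packaging: the paper's integral maximum principle --- the monotonicity of $\exp\left(2\lambda t-\tfrac{2}{s^2}(\cosh(\tfrac{\kappa s}{2})-1)t\right)\sum_{x}m_xf^2(t,x)e^{\omega(x)}$ with $\omega=\kappa\rho(\cdot,A)$ and $\kappa=2\alpha$ --- is precisely your Gronwall bound on $\|e^{\alpha\phi}e^{t\Delta}e^{-\alpha\phi}\|_{\ell^2_m\to\ell^2_m}$, and the key ingredients (Green's formula, the estimate $\cosh(\alpha u)-1\leq (u^2/s^2)(\cosh(\alpha s)-1)$ for $|u|\leq s$ combined with the intrinsic-metric condition, and the Legendre-type optimization yielding $\zeta_s$) coincide exactly. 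The only substantive difference is how the unbounded Laplacian is tamed: you truncate the weight $\phi$ at level $N$ to keep $e^{\pm\alpha\phi}$ bounded, while the paper exhausts by finite Dirichlet problems and passes to the limit --- both are standard and legitimate.
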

For any subsets $A,B$ in $V,$ by setting $f= \mathbf{1}_A$ and $g= \mathbf{1}_B$ (as characteristic functions), we get
\begin{coro}[DGG Lemma on graphs] Under the same assumptions as above
 \begin{equation}\label{eq:Davies}
    \sum_{y\in B}\sum_{x\in A}m_xm_yp_t(x,y)\leq \sqrt{m(A)m(B)}e^{-\lambda t-\zeta_s(t,\rho(A,B))},
  \end{equation}
  where $p_t(x,y)$ is the minimal heat kernel of the graph.
\end{coro}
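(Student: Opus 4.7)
The plan is to obtain the corollary as an immediate specialization of the functional formulation (Theorem~\ref{thm:Davies}) to indicator functions, as indeed hinted in the preamble to the statement. The only real work is (i) handling the case where $A$ or $B$ may have infinite measure, and (ii) unwinding the inner product $\langle e^{t\Delta}f,g\rangle$ into the double sum against the heat kernel.

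First I would dispose of the trivial case: if $m(A)=\infty$ or $m(B)=\infty$, then the right-hand side of \eqref{eq:Davies} is $+\infty$ and there is nothing to prove. So I may assume $m(A),m(B)<\infty$, in which case $f:=\mathbf{1}_A$ and $g:=\mathbf{1}_B$ belong to $\ell^2_m$ with $\|f\|_{\ell^2_m}=\sqrt{m(A)}$ and $\|g\|_{\ell^2_m}=\sqrt{m(B)}$, and clearly $\mathrm{supp}\,f\subset A$, $\mathrm{supp}\,g\subset B$. The hypotheses of Theorem~\ref{thm:Davies} are thus satisfied.

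Next I would unwind the pairing on the left-hand side of Theorem~\ref{thm:Davies}. Using the integral-kernel representation $(e^{t\Delta}f)(y)=\sum_{x\in V}m_x\,p_t(x,y)f(x)$ of the minimal heat semigroup (which is standard for the Friedrichs extension of $\Delta$ on $\ell^2_m$, and holds in particular in the present setting of unbounded Laplacians with intrinsic metric of finite jump size), together with the definition of the $\ell^2_m$ inner product, one obtains
$$\langle e^{t\Delta}\mathbf{1}_A,\mathbf{1}_B\rangle_{\ell^2_m}\;=\;\sum_{y\in V}m_y\,\mathbf{1}_B(y)\sum_{x\in V}m_x\,p_t(x,y)\,\mathbf{1}_A(x)\;=\;\sum_{y\in B}\sum_{x\in A}m_xm_y\,p_t(x,y).$$
Since $p_t\geq 0$, this quantity is nonnegative, so the absolute value in Theorem~\ref{thm:Davies} drops away.

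Plugging the two norm computations and the identification of the inner product into the bound of Theorem~\ref{thm:Davies} with $f=\mathbf{1}_A,\,g=\mathbf{1}_B$ yields exactly \eqref{eq:Davies}. I do not expect any real obstacle: the only thing to watch is the justification of the heat-kernel representation, which however is part of the standing setup once $\Delta$ is defined via the Dirichlet form and $p_t$ is taken to be its minimal kernel.
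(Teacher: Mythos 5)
Your proposal is correct and follows exactly the paper's route: the corollary is obtained by specializing Theorem~\ref{thm:Davies} to $f=\mathbf{1}_A$, $g=\mathbf{1}_B$ and unwinding the inner product via the heat kernel representation. The paper states this in one line; your additional remarks on the infinite-measure case and the kernel representation of $e^{t\Delta}$ merely fill in details left implicit.
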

The function $\zeta_s(t,r),$ for $s=1$, appeared already in a number of publications in the graph setting, see for example \cite{Davies93, Pang93, Delmotte99}. For the sharpness of our DGG Lemma, we refer to Section~\ref{sec:sharp}. It is not difficult to see that for large time, i.e. $t\gg r,$ $\zeta_1(t,r)$ behaves like $\frac{r^2}{2t}.$ Hence for large time the DGG Lemma yields the Gaussian type estimate for the heat kernel in form of $\exp\left(-\frac{d^2(B_1,B_2)}{2t}\right).$ At first glance, it seems that we gain a factor two in the Gaussian exponent compared to the Riemannian case, which sounds absurd. In fact, it is not contradictory because a natural choice of distance functions, to satisfy the condition $s=1,$ is the combinatorial distance and in order to make it an intrinsic metric, one usually needs to re-normalize the physical Laplacian $\Delta$ to a so-called normalized Laplacian, such as $\frac12\Delta$ in the case of the lattice $\Z,$ which finally results in a scaling change.

The DGG Lemma has various applications for heat kernel estimates. Among many, on manifolds combining it with the Harnack inequality, obtained by the gradient estimate technique, one derives pointwise heat kernel upper bound estimates in term of the curvature bounds \cite{LiYau86, Li12}. For the counterpart on graphs, we refer to \cite{bauer2015} and \cite[Theorem~1.2]{BauerHuaYau14}. In particular, along the same lines one gets the sharp decay estimates involving the bottom of the Laplacian spectrum, i.e. $\exp(-\lambda t),$ using this new DGG Lemma. Moreover, as a direct application, it yields the Davies' heat kernel estimate, \cite[Theorem~10]{Davies93}, by setting $A=\{x\},B=\{y\},$ for $x,y\in V.$
\begin{coro}[Davies]
  For a weighted graph $(V,\mu,m)$ with the normalized Laplacian,
  $$p_t(x,y)\leq \frac{1}{\sqrt{m_xm_y}}\exp(-\lambda t-\zeta_1(t,d(x,y))),$$ where $d$ is the combinatorial distance.
\end{coro}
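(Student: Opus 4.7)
The plan is to obtain Davies' estimate as a direct specialization of the DGG Lemma corollary to singleton sets $A=\{x\}$ and $B=\{y\}$. The first step is to verify that the hypotheses of the corollary are met in this setting. For the normalized Laplacian on $(V,\mu,m)$ one has $m_x=\sum_{y\sim x}\mu_{xy}$, so the combinatorial distance $d$ satisfies
\[
\sum_{y\sim x}\mu_{xy}\,d(x,y)^2 \;=\; \sum_{y\sim x}\mu_{xy} \;=\; m_x,
\]
which is exactly the intrinsicness condition. Moreover, since $d(x,y)=1$ for every edge, the jump size is $s=1$. Hence Theorem~\ref{thm:Davies} and its corollary apply with $\rho=d$ and $s=1$.

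Next I would apply the corollary directly. Taking $A=\{x\}$ and $B=\{y\}$ gives $m(A)=m_x$, $m(B)=m_y$, $\rho(A,B)=d(x,y)$, and the double sum on the left-hand side of \eqref{eq:Davies} collapses to the single term $m_xm_y\,p_t(x,y)$. The corollary therefore reads
\[
m_xm_y\,p_t(x,y)\;\leq\;\sqrt{m_xm_y}\,\exp\bigl(-\lambda t-\zeta_1(t,d(x,y))\bigr).
\]
Dividing both sides by $m_xm_y$ yields the claimed bound
\[
p_t(x,y)\;\leq\;\frac{1}{\sqrt{m_xm_y}}\exp\bigl(-\lambda t-\zeta_1(t,d(x,y))\bigr).
\]

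There is essentially no obstacle in the argument itself; the content of the corollary is entirely packaged into Theorem~\ref{thm:Davies}. The only subtle point, which is worth stating explicitly in the proof, is why the combinatorial distance qualifies as an intrinsic metric with jump size $1$ exactly when the Laplacian is the normalized one — under a different normalization of $m$ one would need to rescale $d$ and the value of $s$ accordingly, and the bound would change by the corresponding time rescaling discussed after the statement of the main theorem.
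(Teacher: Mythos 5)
Your proposal is correct and follows exactly the route the paper intends: the paper derives Davies' estimate precisely by setting $A=\{x\}$, $B=\{y\}$ in the DGG Lemma corollary, having already noted in Section~\ref{sec:2} that the combinatorial distance is intrinsic with jump size $1$ for the normalized Laplacian. (Only a pedantic remark: with self-loops one gets $\sum_{y}\mu_{xy}d(x,y)^2\leq m_x$ rather than equality, but the intrinsicness condition only requires the inequality, so your argument stands.)
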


$\mathbf{Acknowledgements.}$
 We thank Alexander Grigor'yan, Thierry Coulhon and Adam Sikora for many fruitful discussions on Davies-Gaffney-Grigor'yan Lemma on manifolds and metric measure spaces.


\section{Setting and definitions}\label{sec:2}
\subsection{Weighted graphs}
We recall basic definitions for weighted graphs. Let $V$ be a countable discrete space serving as the set of vertices of a graph, $\mu:V\times V\ni(x,y)\mapsto \mu_{xy}\in [0,\infty)$ be an (edge) weight function satisfying
\begin{itemize}\item $\mu_{xy}=\mu_{yx},\ \ \ \ \quad\quad \forall x,y\in V,$ \item $\sum_{y\in V}\mu_{xy}<\infty,\quad \forall x\in V,$\end{itemize} and $m:V\ni x\mapsto m_x\in(0,\infty)$ be a measure on $V$ of full support. These induce a combinatorial (undirect) graph structure $(V,E)$ with the set of vertices $V$ and the set of edges $E$ such that for $x,y\in V,$ $\{x,y\}\in E$ if and only if $\mu_{xy}>0,$ in symbols $x\sim y.$
We refer to a triple $(V,\mu,m)$ a \emph{weighted graph} with the underlying graph $(V,E).$
Note that $(V,E)$ is not necessarily locally finite and possibly possesses self-loops. We denote by $d$ the combinatorial distance on it.

Given a weighted graph $(V,m,\mu),$ one can associate it with a Dirichlet form, see \cite{KellerLenz12}.
We denote by $C(V)$ the set of real functions on $V$ and by $C_c(V)$ the set of real functions with finite support. On the measure space $(V,m)$ we write $\ell^2(V,m),$ or simply $\ell^2_m,$ for the space of $\ell^2$-summable functions w.r.t. the measure $m.$ It becomes a Hilbert space if we equip it with an $\ell^2$-inner product $$\langle f,g\rangle:=\sum_{x\in V}f(x)g(x)m_x,\quad \forall f,g\in \ell^2_m.$$ The $\ell^2$ norm of a function $f\in\ell^2_m$ is given by $\|f\|_{\ell^2_m}:=\sqrt{\langle f,f\rangle}.$ Define the quadratic form $\widetilde{Q}:C(V)\to [0,\infty]$ given by
$$\widetilde{Q}(f):=\frac12\sum_{x,y\in V}\mu_{xy}|f(x)-f(y)|^2,\quad \forall f\in C(V).$$ The Dirichlet form $Q$ on $\ell^2_m$ is defined as the completion of $\widetilde{Q}|_{C_c(V)},$ the restriction of $\widetilde{Q}$ on $C_c(V)$, under the norm $$\|\cdot\|:=\|\cdot\|_{\ell^2_m}+\widetilde{Q}(\cdot).$$ We call the generator associated to $Q$ the Laplacian and denote it by $\Delta.$ In case that the underlying graph $(V,E)$ is locally finite, $C_c(V)$ lies in the domain of the generator and the Laplacian acts as, see \cite{KellerLenz10},
$$\Delta f (x) = \frac{1}{m_x} \sum_{y\in V} \mu_{xy}(f(y) - f(x)) \quad \forall f\in C_c(V).$$ The boundedness of the Laplacian, as a linear operator on $\ell^2_m,$ strongly depends on the choice of the measure $m.$ We call it the \emph{normalized Laplacian} if we set $m_x=\sum_{y\in V}\mu_{xy}$ for all $x\in V,$ and the \emph{physical Laplacian} if $m\equiv 1.$ The former is always a bounded operator on $\ell^2_m$ while the latter is possibly not.

In order to deal with unbounded Laplacians, it is often crucial to use so-called intrinsic metrics introduced in \cite{FrankLenzWingert12}.
\begin{defi}[Pseudo metric/intrinsic metric/jump size]\label{d:intrinsic} A \emph{pseudo metric} $\rho$ is a symmetric function, $\rho:V\times V\to[0,\infty),$ with zero diagonal which satisfies the triangle inequality. A pseudo metric $\rho$ on $V$ is called \emph{intrinsic} if
$\sum_{y\in V}\mu_{xy}\rho^{2}(x,y)\leq m_x, \forall x\in V.$ The \emph{jumps size} $s$ of a pseudo metric $\rho$ is given by $
    s:=\sup\{\rho(x,y)\mid x,y\in V, x\sim y\}\in[0,\infty].$
\end{defi} By the definition, one easily checks that the combinatorial distance $d$ is an intrinsic metric for the normalized Laplacian. For any weighted graph, one can always construct an intrinsic metric on it, see e.g. \cite{Huang11,HuangKellerMasamuneWojciechowski}.

\begin{defi}[Lipschitz function] We say that a function $f:V\to\R$ is \emph{Lipschitz} w.r.t. the (intrinsic) metric $\rho$ if $|f(x)-f(y)|\leq \kappa\rho(x,y)$ for all $ x,y\in V.$
The minimal constant $\kappa$ such that the above inequality holds is called the Lipschitz constant of $f$.
\end{defi}

\begin{defi}[Solution of the Dirichlet heat equation]
We say $u:[0,\infty)\times V \to \mathbb{R}$ solves the Dirichlet heat equation on the finite subset $\Omega \subset V$ if
\begin{equation*}\left\{ \begin{array}{r@{=}l@{\quad}l}
\frac{\partial}{\partial t}u(t,x) & \Delta_{\Omega}u(t,x), & x\in\Omega, t\geq 0,\\
u(0,x) & f(x),& x\in \Omega,
\\
u(t,x) & 0, & x \in V\setminus \Omega, t\geq 0. \end{array}\right.
\end{equation*}
where $\Delta_{\Omega}$ denotes the Laplacian with Dirichlet boundary condition, called Dirichlet Laplacian, on $\Omega,$ see e.g. \cite{Coulhon98,BauerHuaJost14}.
\end{defi}


\section{Integral maximum principle}\label{sec:Integral Max}
Throughout the rest of the paper we always assume that $(V,\mu,m)$ is a weighted graph with an intrinsic metric $\rho$ and finite jump size $s>0$. We begin with a simple lemma. For any $f\in C(V)$ and $x,y\in V,$ we denote by $\nabla_{xy}f:=f(y)-f(x)$ the difference of $f$ between $x$ and $y.$ By direct calculation, we have the following lemma.
\begin{lemma}\label{l:basic}
For two functions $f,g\in C(V)$ and $x,y\in V,$
\begin{enumerate}
\item[(a)]  $\nabla_{xy}(fg) = f(x) \nabla_{xy}g + g(y)\nabla_{xy}f = f(x) \nabla_{xy}g + g(x)\nabla_{xy}f  + \nabla_{xy}f\nabla_{xy}g$
\item[(b)]  $\nabla_{xy}e^{f}=(e^{\frac12f(x)}+e^{\frac12f(y)})\nabla_{xy}e^{\frac12f} $
\item[(c)] $|\nabla_{xy}(fe^{\frac12g})|^2 - \nabla_{xy}f\nabla_{xy}(fe^g) = f(x)f(y)|\nabla_{xy}e^{\frac12g}|^2.$
\end{enumerate}
\end{lemma}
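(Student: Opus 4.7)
The lemma is a collection of purely algebraic identities for finite differences of functions on the vertex set, so the proof plan is simply: expand both sides and verify term by term. There is no real obstacle — the hardest part is just choosing a clean order in which to carry out the expansions so that (b) and (c) fall out of (a) rather than being computed from scratch.

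For (a), the plan is to rewrite $\nabla_{xy}(fg)=f(y)g(y)-f(x)g(x)$ and then add and subtract the mixed term $f(x)g(y)$ (for the first equality) and $f(x)g(x)$ together with $f(y)g(x)$ (for the second equality). Both identities are then immediate. This is the discrete analog of the Leibniz rule, including the extra cross term $\nabla_{xy}f\,\nabla_{xy}g$ that reflects the fact that differentiation on a graph is not a derivation.

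For (b), I would observe that $e^{f(y)}-e^{f(x)}$ is a difference of squares once one writes $e^{f(z)}=(e^{f(z)/2})^2$. Factoring gives
\[
\nabla_{xy}e^{f}=\bigl(e^{\frac12 f(y)}-e^{\frac12 f(x)}\bigr)\bigl(e^{\frac12 f(y)}+e^{\frac12 f(x)}\bigr),
\]
which is exactly the claimed identity.

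For (c), I plan a direct expansion. Writing $a=f(x)$, $b=f(y)$, $u=e^{g(x)/2}$, $v=e^{g(y)/2}$, the first term becomes $(bv-au)^2=b^2v^2-2abuv+a^2u^2$, while $\nabla_{xy}f\,\nabla_{xy}(fe^g)=(b-a)(bv^2-au^2)=b^2v^2-abu^2-abv^2+a^2u^2$. Subtracting, the $a^2u^2$ and $b^2v^2$ terms cancel and what remains is $ab(u^2-2uv+v^2)=ab(u-v)^2=f(x)f(y)|\nabla_{xy}e^{g/2}|^2$. This finishes the lemma. All three identities hold pointwise and make no use of the graph structure beyond the definition of $\nabla_{xy}$, so no hypothesis on $\mu$, $m$, or $\rho$ is needed here; the point of stating them now is to have them ready for the integration-by-parts and exponential-weighting manipulations in the integral maximum principle that follows.
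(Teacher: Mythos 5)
Your proof is correct, and it matches the paper's approach: the paper simply asserts the lemma "by direct calculation," which is exactly the term-by-term expansion you carry out. All three verifications (the added mixed terms in (a), the difference of squares in (b), and the cancellation leaving $ab(u-v)^2$ in (c)) check out.
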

We first state the integral maximum principle for Dirichlet Laplacians on finite subsets of a graph and extend it later to the whole graph. On Riemannian manifolds the integral maximum principle was introduced by Grigor'yan \cite{Grigoryan94}.
\begin{lemma}\label{l:monotonicity Delmotte finite}
Let  $\omega$ be a Lipschitz function on $V$ with Lipschitz constant $\kappa$ and assume that $f:[0,\infty)\times V\to\R$ solves the Dirichlet heat equation on a finite subset $\Omega\subset V$.  Then the function  $$\exp\left({2\lambda_1(\Omega)t-\frac{2}{s^2}(\cosh(\frac{\kappa s}{2})-1)t}\right)E_\Omega(t)$$ is nonincreasing in $t\in [0,\infty)$, where
$E_\Omega(t):= \sum_{x\in \Omega}m_xf^2(t,x)e^{\omega(x)}$ \red{,} and $\lambda_1(\Omega)$ is the first eigenvalue of Dirichlet Laplacian on $\Omega$.
\end{lemma}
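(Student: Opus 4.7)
The plan is to differentiate $E_\Omega(t)$, apply summation by parts, and bound the resulting expression from above by $(-2\lambda_1(\Omega) + \frac{2}{s^2}(\cosh(\kappa s/2)-1))E_\Omega(t)$; this differential inequality is equivalent to the desired monotonicity after multiplying by the stated exponential prefactor.

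First I would use the heat equation to write
\[
\tfrac{d}{dt}E_\Omega(t) = 2\sum_{x\in\Omega}m_x f(t,x)\Delta_\Omega f(t,x)\,e^{\omega(x)} = -\sum_{x,y\in V}\mu_{xy}\,\nabla_{xy}f\,\nabla_{xy}(fe^\omega),
\]
where the second equality comes from the symmetry of the form together with the fact that $f(t,\cdot)$ vanishes outside $\Omega$. Then I would invoke Lemma~\ref{l:basic}(c) with $g=\omega$ to split the integrand as
\[
\nabla_{xy}f\,\nabla_{xy}(fe^{\omega}) = |\nabla_{xy}(fe^{\omega/2})|^2 - f(x)f(y)|\nabla_{xy}e^{\omega/2}|^2.
\]
The first term, summed against $\mu_{xy}$, equals $2\widetilde Q(fe^{\omega/2})$. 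Since $fe^{\omega/2}$ is supported in the finite set $\Omega$, the variational characterisation of $\lambda_1(\Omega)$ gives $2\widetilde Q(fe^{\omega/2})\geq 2\lambda_1(\Omega)\,E_\Omega(t)$, which produces the $-2\lambda_1(\Omega)E_\Omega$ contribution.

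The heart of the proof is the second term. The key identity is
\[
(e^{a/2}-e^{b/2})^2 = 2e^{(a+b)/2}\bigl(\cosh((a-b)/2)-1\bigr),
\]
which I would combine with $|f(x)f(y)|\leq \tfrac12(f(x)^2+f(y)^2)$ to obtain
\[
|f(x)f(y)|\,|\nabla_{xy}e^{\omega/2}|^2 \leq \bigl(f(x)^2 e^{\omega(x)}+f(y)^2 e^{\omega(y)}\bigr)\bigl(\cosh(\tfrac{\omega(y)-\omega(x)}{2})-1\bigr).
\]
Because $\mu_{xy}>0$ forces $\rho(x,y)\leq s$ and $\omega$ is $\kappa$-Lipschitz, the argument of $\cosh$ is bounded by $\kappa s/2$ in absolute value. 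Since the function $u\mapsto(\cosh u-1)/u^2$ is nondecreasing on $[0,\infty)$, I can compare
\[
\cosh(\tfrac{\omega(y)-\omega(x)}{2})-1 \leq \frac{(\omega(y)-\omega(x))^2}{\kappa^2 s^2}\bigl(\cosh(\kappa s/2)-1\bigr)\leq \frac{\rho(x,y)^2}{s^2}\bigl(\cosh(\kappa s/2)-1\bigr).
\]

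Finally, summing over $y$ for fixed $x$ and applying the intrinsic-metric condition $\sum_y\mu_{xy}\rho(x,y)^2\leq m_x$ produces the bound $\frac{2(\cosh(\kappa s/2)-1)}{s^2}E_\Omega(t)$ for the second term. Assembling everything yields
\[
\tfrac{d}{dt}E_\Omega(t)\leq \Bigl(-2\lambda_1(\Omega)+\tfrac{2}{s^2}(\cosh(\kappa s/2)-1)\Bigr)E_\Omega(t),
\]
which is exactly the claimed monotonicity. I expect the main technical obstacle to be controlling the sign of $f(x)f(y)$ (which may be negative) while using the intrinsic metric condition without wasting sharpness; the monotonicity of $(\cosh u-1)/u^2$ is what makes it possible to keep the constant sharp and produce precisely $\cosh(\kappa s/2)-1$ rather than a coarser bound.
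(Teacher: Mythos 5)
Your proposal is correct and follows essentially the same route as the paper: the same derivative computation via Green's formula, the same splitting via Lemma~\ref{l:basic}(c) into a Dirichlet-form term (handled by the Rayleigh quotient for $\lambda_1(\Omega)$) and a term $f(x)f(y)|\nabla_{xy}e^{\omega/2}|^2$ (handled by the $\cosh$ identity, AM--GM, the monotonicity of $u\mapsto(\cosh u-1)/u^2$, and the intrinsic-metric condition). The only cosmetic difference is that you chain the Lipschitz bound and the monotonicity in a single step where the paper does it in two, which changes nothing of substance.
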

\begin{proof}

 Since $f$ solves the Dirichlet heat equation on $\Omega$, $f(t,x)=0$ for any $x\in V\setminus \Omega$ and $t\geq 0,$ together with Green's formula (see e.g. \cite[Proposition~3.2]{KellerLenz10}, \cite[Lemma~4.7]{Haeseler11} or \cite[Lemma~2.4]{Schmidt12}) we obtain
 \begin{eqnarray}\label{eq:imp1}
    E_\Omega'(t)&=&\sum_{x\in V}m_x 2f(t,x)(\Delta_\Omega f(t,x)) e^{\omega(x)}=-\sum_{x,y\in V}\mu_{xy}\nabla_{xy}f\nabla_{xy}(fe^\omega)\nonumber\\&=&-\sum_{x,y\in V}\mu_{xy}|\nabla_{xy}(fe^{\frac12\omega})|^2 +\sum_{x,y\in V}\mu_{xy}(|\nabla_{xy}(fe^{\frac12\omega})|^2 - \nabla_{xy}f\nabla_{xy}(fe^{\omega})),
  \end{eqnarray}
 where we added zero in the last line. The first sum on the r.h.s. can be controlled from above by the Reighley quotient characterization of $\lambda_1(\Omega)$
  $$-\sum_{x,y\in V }\mu_{xy}|\nabla_{xy}(fe^{\frac12\omega})|^2\leq -2\lambda_1(\Omega)\sum_{x\in\Omega} m_x f(t,x)^2e^{\omega(x)}.$$ Applying Lemma \ref{l:basic} $(c)$, the second sum on the r.h.s. is given by
 \begin{eqnarray}\label{eq:eq1}
 &&\sum_{x,y\in\Omega}\mu_{xy}f(t,x)f(t,y)|\nabla_{xy}e^{\frac12\omega}|^2\nonumber
    \\&=&2\sum_{x,y\in\Omega}\mu_{xy}f(t,x)f(t,y)e^{\frac12(\omega(x)+\omega(y))}\left(\cosh\frac{\omega(y)-\omega(x)}{2}-1\right)\nonumber\\
    &\leq&\sum_{x,y\in\Omega}\mu_{xy}(f^2(t,x)e^{\omega(x)}+f^2(t,y)e^{\omega(y)})\left(\cosh\frac{\omega(y)-\omega(x)}{2}-1\right)\nonumber\\
    &=&2\sum_{x,y\in\Omega}\mu_{xy}f^2(t,x)e^{\omega(x)}\left(\cosh\frac{\omega(y)-\omega(x)}{2}-1\right),
  \end{eqnarray} where the last equality follows from the symmetry of the equation in $x$ and $y.$ Now we claim that for any neighbors $x\sim y,$ $$\cosh\frac{\omega(y)-\omega(x)}{2}-1\leq \rho(x,y)^2\frac{1}{s^2}(\cosh\frac{\kappa s}{2}-1).$$ It suffices to consider $x,y\in V$ such that $\rho(x,y)>0,$ otherwise it reduces to a trivial equation by the Lipschitz property of $\omega.$ The claim follows from
 $$ \cosh\frac{\omega(y)-\omega(x)}{2}-1 \leq\cosh\frac{\kappa \rho(x,y)}{2}-1 \leq\rho(x,y)^2\frac{1}{s^2}(\cosh\frac{\kappa s}{2}-1),$$ where we have used the monotonicity of the $\cosh$ function in the first, and the monotonicity of the function
  $$t\mapsto \frac{1}{t^2}(\cosh\frac{\kappa t}{2}-1),\quad t>0,$$ in the second inequality. Together with \eqref{eq:eq1} our claim implies that the second sum on the r.h.s. of \eqref{eq:imp1} can eventually be estimated from above by
  \begin{eqnarray*}&&2\sum_{x,y\in\Omega}\mu_{xy}f^2(t,x)e^{\omega(x)}\rho(x,y)^2\frac{1}{s^2}(\cosh\frac{\kappa s}{2}-1)\\
  &\leq &\frac{2}{s^2}(\cosh\frac{\kappa s}{2}-1)\sum_{x\in\Omega}m_xf^2(t,x)e^{\omega(x)},\end{eqnarray*} where we used that $\rho$ is an intrinsic metric. Combining everything, we get for any $t\geq 0,$
  $$E_\Omega'(t)\leq (-2\lambda_1(\Omega)+\frac{2}{s^2}(\cosh\frac{\kappa s}{2}-1))E_\Omega(t),$$ which implies the lemma.

\end{proof}

By a standard exhaustion argument, see e.g. \cite[Corollary~13.2]{Li12}, \cite[Section~3]{BauerHuaYau14} or \cite{KellerLenz10}, we obtain the integral maximum principle on the whole graph.

\begin{lemma}[Integral maximum principle]\label{l:monotonicity Delmotte} Let $\omega$ be a Lipschitz function on $V$ with Lipschitz constant $\kappa$ and $f(t,x):=e^{t\Delta}f_0(x)$ for some $f_0\in \ell^2_m.$ Then the function  $$\exp\left({2\lambda t-\frac{2}{s^2}(\cosh(\frac{\kappa s}{2})-1)t}\right)E(t)$$ is nonincreasing in $t\in [0,\infty)$,
where $E(t):= \sum_{x\in V}m_xf^2(t,x)e^{\omega(x)}$ and $\lambda$ is the bottom of the $\ell^2$-spectrum of Laplacian $\Delta.$
\end{lemma}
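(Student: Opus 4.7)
The plan is an exhaustion argument that reduces the statement to the preceding Lemma~\ref{l:monotonicity Delmotte finite}. Without loss of generality I assume the initial energy $E(0) = \sum_{x \in V} m_x f_0(x)^2 e^{\omega(x)}$ is finite, since otherwise the monotonicity claim is vacuous. Fix an increasing exhaustion $\Omega_1 \subset \Omega_2 \subset \cdots$ of $V$ by finite subsets with $\bigcup_n \Omega_n = V$, set $f_n(t,\cdot) := e^{t\Delta_{\Omega_n}}(f_0 \mathbf{1}_{\Omega_n})$ (extended by zero outside $\Omega_n$), and apply Lemma~\ref{l:monotonicity Delmotte finite} to each $\Omega_n$ with the same $\omega$. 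This yields, for every $n$, that
\[
G_n(t) := \exp\!\Bigl(\bigl(2\lambda_1(\Omega_n) - \tfrac{2}{s^2}(\cosh(\tfrac{\kappa s}{2}) - 1)\bigr) t\Bigr)\, E_n(t),\qquad E_n(t) := \sum_{x \in V} m_x f_n(t,x)^2 e^{\omega(x)},
\]
is nonincreasing on $[0,\infty)$.

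Next I would invoke the standard properties of the exhaustion (cf.\ \cite{KellerLenz10,BauerHuaYau14,Li12}): $\lambda_1(\Omega_n) \searrow \lambda$, and $f_n(t,x) \to f(t,x) := e^{t\Delta} f_0(x)$ pointwise on $[0,\infty) \times V$. To pass the inequality $G_n(t_2) \le G_n(t_1)$ to the limit $n\to\infty$, Fatou's lemma applied to the pointwise limit handles the left-hand side, giving $G(t_2) \le \liminf_n G_n(t_2)$. For the right-hand side I need the stronger statement $E_n(t_1) \to E(t_1)$: first proved for $f_0 \ge 0$ via the monotone convergence theorem, using that the Dirichlet heat kernels $p^{\Omega_n}_t$ increase in the domain so that $f_n \nearrow f$; and then extended to general $f_0 \in \ell^2_m$ by the Kato-type domination $|f_n(t,x)| \le (e^{t\Delta}|f_0|)(x)$ together with dominated convergence.

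The hard part will be this last convergence step, because the weight $e^{\omega}$ is in general neither bounded nor summable against $m$, and the Laplacian $\Delta$ may be unbounded. The key bound that makes dominated convergence applicable is
\[
\sum_{x \in V} m_x \bigl((e^{t\Delta}|f_0|)(x)\bigr)^2 e^{\omega(x)} \le E(0)\, \exp\!\Bigl(\bigl(-2\lambda + \tfrac{2}{s^2}(\cosh(\tfrac{\kappa s}{2}) - 1)\bigr) t\Bigr) < \infty,
\]
which itself is the conclusion of the already-established nonnegative case applied to $|f_0|$. Once this is in place, combining $\lambda_1(\Omega_n) \to \lambda$ with Fatou on the left and monotone/dominated convergence on the right in $G_n(t_2) \le G_n(t_1)$ delivers $G(t_2) \le G(t_1)$, which is the claimed monotonicity of $G$.
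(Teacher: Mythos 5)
Your proposal is exactly the ``standard exhaustion argument'' that the paper invokes for this lemma without writing out any details (it only cites \cite{Li12}, \cite{BauerHuaYau14} and \cite{KellerLenz10}), so in substance you are following the intended route: apply Lemma~\ref{l:monotonicity Delmotte finite} on an exhaustion, use $\lambda_1(\Omega_n)\searrow\lambda$ and $f_n\to f$, and pass to the limit via monotone convergence for $f_0\ge 0$ and domination by $e^{t\Delta}|f_0|$ in general. The convergence analysis is sound, and the a priori bound on $\sum_x m_x\bigl((e^{t\Delta}|f_0|)(x)\bigr)^2e^{\omega(x)}$ obtained from the nonnegative case is the right way to justify dominated convergence against the unbounded weight $e^{\omega}$.

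The one slip is your opening reduction: if $E(0)=\infty$ the claim is \emph{not} vacuous, since $E(t_1)$ may become finite for some $t_1>0$ (note the paper's remark that $E(t)=\infty$ is allowed while monotonicity still holds), and one must then prove $G(t_2)\le G(t_1)$ for $t_1\le t_2$. This is easily repaired rather than a fatal gap: whenever $E(t_1)<\infty$, apply your finite-initial-energy case to the initial datum $f(t_1,\cdot)\in\ell^2_m$ via the semigroup identity $f(t_1+s,\cdot)=e^{s\Delta}f(t_1,\cdot)$, which gives monotonicity of $G$ on $[t_1,\infty)$; together with the trivial inequalities when $E(t_1)=\infty$ this yields the full statement. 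With that one sentence added, your proof is complete.
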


\begin{rem} Although it is possible that $E(t)=\infty$ for some $t\geq 0,$ the monotonicity still holds.
\end{rem}



\section{Proof of the main theorem}\label{sec:main theorem}
\begin{proof}[Proof of Theorem \ref{thm:Davies}]
Denote $r=\rho(A,B).$ For any $\kappa>0,$we set $\omega(x)=\kappa \rho(x,A),\forall x\in V.$ Then $\omega$ is a Lipschitz function with Lipschitz constant at most $\kappa$ and for any $h\in C(V)$
$$e^{\kappa r}\sum_{x\in B}m_xh^2(x)\leq \sum_{x\in B}m_xh^2(x)e^{\omega(x)}.$$ For $f\in\ell^2_m$ with $\mathrm{supp} f\subset A,$ let $f(t,x)=e^{t\Delta}f(x).$ Then the above inequality for $h(\cdot)=f(t,\cdot)$ and Lemma \ref{l:monotonicity Delmotte} yield
\begin{eqnarray*}\sum_{x\in B}m_xf^2(t,x)&\leq& e^{-\kappa r}E(t)\leq \exp\left(-2\lambda t+\frac{2}{s^2}(\cosh\frac{\kappa s}{2}-1)t-\kappa r\right)E(0)\\
&=&\exp\left(2(-\lambda t+\frac{1}{s^2}(\cosh\frac{\kappa s}{2}-1)t-\frac{\kappa}{2} r)\right)\sum_{x\in A}m_x f^2(x),\end{eqnarray*}
where we used that $\mathrm{supp} f\subset A.$  Since this estimate is true for all $\kappa>0$, we can choose, for fixed $s,t>0$ and $r\geq0,$ $\kappa$ such that the r.h.s. attains its minimum. One easily checks that $\zeta_s(t,r),$ defined in the introduction, is equal to
$$-\inf_{\kappa>0}\left(\frac{1}{s^2}(\cosh\frac{\kappa s}{2}-1)t-\frac{\kappa}{2} r\right).$$
Hence
$$\sum_{x\in B}m_xf^2(t,x)\leq e^{2(-\lambda t-\zeta_s(t,r))}\sum_{x\in A}m_x f^2(x).$$
That is, for all $f\in \ell^2_m$ with $\mathrm{supp} f\subset A$
$$\sup_{\substack{g\in \ell^2_m\\\mathrm{supp} g\subset B}}\frac{|\langle e^{t\Delta}f,g\rangle|^2}{\|g\|^2_{\ell^2_m}}=\sum_{x\in B}m_x|e^{t\Delta}f|^2\leq e^{2(-\lambda t-\zeta_s(t,r))}\|f\|_{\ell^2_m}^2.$$ This proves the theorem.\end{proof}

\section{Sharpness of the results}\label{sec:sharp}
The sharpness of the term $e^{-\zeta_s(t,\rho(A,B))}$ in \eqref{eq:Davies} in our DGG Lemma can be seen from Pang's result \cite[Theorem~3.5]{Pang93}.
\begin{example}[Pang]\label{example of Pang} Let $\mathbb{Z}$ be the unweighted standard one-dimensional lattice, i.e. an infinite line with each edge of weight one. For any $x,y\in \Z,$ set $d=d(x,y).$ The heat kernel for the normalized Laplacian satisfies, for some $C>1$,
\begin{eqnarray*}&&C^{-1}d^{-\frac{1}{2}}e^{-\zeta_1(t,d)}\leq p_t(x,y) \leq Cd^{-\frac{1}{2}}e^{-\zeta_1(t,d)},\quad \mathrm{for}\ 0<t\leq d\\&&C^{-1}t^{-\frac{1}{2}}e^{-\zeta_1(t,d)}\leq p_t(x,y) \leq Ct^{-\frac{1}{2}}e^{-\zeta_1(t,d)},\quad \mathrm{for}\ d\leq t.\end{eqnarray*}
\end{example}

The sharpness of the term $e^{-\lambda t}$ in \eqref{eq:Davies} follows from the long time heat kernel behavior, i.e. the exponential decay related to the bottom of the $\ell^2$ spectrum of Laplacian, which was first proved by Li \cite{Li86} on Riemannian manifolds and extended to graphs with unbounded Laplacians by Keller et al \cite[Corollary~5.6]{KellerLenzVogtWojciechowski11}.
\begin{thm} Let $(V,\mu,m)$ be a weighted graph. Then the minimal heat kernel satisfies
$$\lim_{t\to\infty}\frac{\log p_t(x,y)}{t}=-\lambda,$$
where $\lambda$ is the bottom of the $\ell^2$ spectrum of Laplacian. \end{thm}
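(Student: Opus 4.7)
The plan is to bracket $\log p_t(x,y)/t$ from above and below by $-\lambda$ separately. The upper inequality $\limsup_{t\to\infty}\log p_t(x,y)/t\leq -\lambda$ is essentially a free byproduct of the DGG Lemma established above: applying the corollary of Theorem~\ref{thm:Davies} with $A=\{x\}, B=\{y\}$ and discarding the nonnegative term $\zeta_s(t,\rho(x,y))$ yields $p_t(x,y)\leq e^{-\lambda t}/\sqrt{m_xm_y}$, whence the bound follows after taking logarithms and dividing by $t$.

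For the matching $\liminf_{t\to\infty}\log p_t(x,y)/t\geq -\lambda$, I would invoke the spectral theorem for the nonnegative self-adjoint operator $-\Delta$ on $\ell^2_m$ (self-adjointness is built into the Dirichlet-form construction of Section~\ref{sec:2}). Let $E(\cdot)$ denote its spectral resolution and $\mu_\varphi(\cdot):=\langle E(\cdot)\varphi,\varphi\rangle$ the scalar spectral measures. Since $\lambda$ belongs to the spectrum of $-\Delta$, the projection $E([\lambda,\lambda+\varepsilon))$ is nonzero for every $\varepsilon>0$; and since $\{\delta_z:z\in V\}$ is a total family in $\ell^2_m$, at least one vertex $x_0$ must satisfy $c_\varepsilon:=\mu_{\delta_{x_0}}([\lambda,\lambda+\varepsilon))>0$ (otherwise $E([\lambda,\lambda+\varepsilon))$ would vanish on a dense set and hence identically). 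Combining the kernel identity $\langle e^{2t\Delta}\delta_{x_0},\delta_{x_0}\rangle=m_{x_0}^2 p_{2t}(x_0,x_0)$ with the spectral representation then gives
$$m_{x_0}^2 p_{2t}(x_0,x_0)=\int_\lambda^\infty e^{-2ts}\,d\mu_{\delta_{x_0}}(s)\;\geq\; c_\varepsilon\,e^{-2t(\lambda+\varepsilon)},$$
so $\liminf_{t\to\infty}\log p_t(x_0,x_0)/t\geq -(\lambda+\varepsilon)$, and letting $\varepsilon\downarrow 0$ gives $-\lambda$. Combined with the upper bound, the limit at $(x_0,x_0)$ actually exists and equals $-\lambda$.

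To transfer the rate from the diagonal at $x_0$ to an arbitrary pair $(x,y)$, I would use Chapman--Kolmogorov to write
$$p_{t+2s}(x,y)\;\geq\; m_{x_0}^2\,p_s(x,x_0)\,p_t(x_0,x_0)\,p_s(x_0,y)$$
for every $s>0$, together with strict positivity of the minimal heat kernel on each connected component to pick some $s$ making the prefactor nonzero; then taking logarithms, dividing by $t+2s$, and letting $t\to\infty$ transfers the exponent since $(u-2s)/u\to 1$ (the case where $x,y$ lie in distinct components is vacuous, as $p_t(x,y)\equiv 0$ there). The principal obstacle is the spectral selection of $x_0$: one must carefully exploit totality of the Dirac family together with Dirichlet-form self-adjointness of $-\Delta$ to locate a vertex whose spectral measure actually charges the bottom of the spectrum, which in the unbounded-Laplacian setting is the nontrivial structural ingredient drawn from Section~\ref{sec:2}. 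All remaining steps are routine once this selection is in hand.
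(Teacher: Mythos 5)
The paper does not actually prove this theorem: it is quoted in Section~\ref{sec:sharp} purely to justify the sharpness of the $e^{-\lambda t}$ factor, and the proof is delegated to Keller--Lenz--Vogt--Wojciechowski \cite[Corollary~5.6]{KellerLenzVogtWojciechowski11} (the graph analogue of Li's result \cite{Li86}), whose argument goes through the large-time behaviour of positivity preserving semigroups. Your self-contained route --- upper bound from $\|e^{t\Delta}\|\leq e^{-\lambda t}$ (the DGG Lemma is overkill here, but harmless), lower bound on the diagonal from the spectral resolution of $-\Delta$ applied to a Dirac mass that charges $[\lambda,\lambda+\varepsilon)$, and propagation to arbitrary $(x,y)$ via Chapman--Kolmogorov and strict positivity of $p_t$ --- is a legitimate and more elementary alternative, and all the individual computations ($\langle e^{2t\Delta}\delta_{x_0},\delta_{x_0}\rangle=m_{x_0}^2p_{2t}(x_0,x_0)$, the totality argument, the three-fold semigroup inequality) are correct.

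Two points need repair. First, your chosen vertex $x_0$ depends on $\varepsilon$: the sets $S_\varepsilon=\{z:\mu_{\delta_z}([\lambda,\lambda+\varepsilon))>0\}$ are nonempty and nested but may have empty intersection as $\varepsilon\downarrow 0$, so the sentence ``letting $\varepsilon\downarrow 0$ gives $-\lambda$ \dots\ the limit at $(x_0,x_0)$ actually exists and equals $-\lambda$'' is not justified for a \emph{fixed} $x_0$. The fix is only a reordering: for fixed $(x,y)$ and each $\varepsilon>0$, pick $x_0(\varepsilon)$, run the Chapman--Kolmogorov transfer to get $\liminf_{t}\log p_t(x,y)/t\geq-(\lambda+\varepsilon)$, and only then let $\varepsilon\downarrow 0$. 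Second, connectedness is genuinely needed and your parenthetical does not dispose of it: the dangerous case is not $x,y$ in different components from each other (where indeed $p_t(x,y)\equiv0$), but $x,y$ lying in the same component while $x_0(\varepsilon)$ lies in a \emph{different} component realizing the global spectral bottom; then $p_s(x,x_0)=0$, the transfer collapses, and the stated limit is in fact false (it equals minus the bottom of the spectrum of the component containing $x,y$). The theorem must be read, as in the cited reference, under the standing assumption that the graph is connected; with that assumption and the reordering above, your proof is complete.
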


\bibliography{Finitep}
\bibliographystyle{alpha}

\end{document}